\newtheorem{theorem}{Theorem}[section]
\newtheorem{lemma}[theorem]{Lemma}
\newtheorem{proposition}[theorem]{Proposition}
\theoremstyle{definition}
\newtheorem{question}[theorem]{Question}
\newcommand{\spinc}{\mathrm{Spin}^c}
\theoremstyle{remark}
\newtheorem{remark}[theorem]{Remark}
\numberwithin{equation}{section}
\begin{document}

\title{Hyperbolic 3-manifolds admitting no fillable contact structures}

\author{Youlin Li}
\address{Department of Mathematics \\Shanghai Jiao Tong University\\
Shanghai 200240, China}
\email{liyoulin@sjtu.edu.cn}

\author{Yajing Liu}
\address{Department of Mathematics \\UCLA\\
520 Portola Plaza, Los Angeles, CA 90095, USA
}
\email{yajingleo@math.ucla.edu}


\begin{abstract}
In this paper, we find infinite hyperbolic 3-manifolds that admit no weakly symplectically fillable contact structures,  using tools in Heegaard Floer theory. We also remark that part of these manifolds do admit tight contact structures. 
\end{abstract}

\maketitle

\section{Introduction}

There is a dichotomy for the contact structures on oriented 3-manifolds introduced by  Eliashberg: tight and overtwisted contact structures. The complete classification of overtwisted contact structures has been accomplished by Eliashberg in \cite{El1}. However, the fundamental questions on the  existence and classification of tight contact structures have still not been completely solved,  and they are widely open especially for hyperbolic 3-manifolds.  According to \cite{El2},  a weakly symplectically fillable contact structure is known to be tight.

\begin{question}[Question 8.2.2, \cite{El3} and Problem 4.142, \cite{Ki}]
\label{que:Ki}
Given an irreducible 3-manifold $M$, does it admit a tight or fillable contact structure?
\end{question}

There has not been a general method to answer Question \ref{que:Ki} for an arbitrary 3-manifold $M$. However, many important examples have been studied.  The first example of a 3-manifold that admits no weakly symplectically fillable contact structures has been found in \cite[Corollary 1.5]{Li1}. It is the Poincar\'{e} homology sphere with its natural orientation reversed.  In fact, it does not admit any tight contact structures either. See \cite{EtH1}. Subsequently, more examples of  3-manifolds that admit no weakly symplectically fillable contact structures have been constructed.  See \cite [Theorem 2.1]{Li2}, \cite [Theorem 4.2]{LiSt2},  \cite[Proposition 4.1]{LiSt3} and \cite[Corollary 3]{OwSt2}. All these examples are either Seifert fibered spaces or reducible 3-manifolds.  All of such Seifert fibered spaces are eventually classified in \cite[Theorem 1.5]{LeLi}. Furthermore, the existence problem of tight contact structures on Seifert fibered spaces has been solved in  \cite{LiSt4}.  Thus,  Question \ref{que:Ki} has been completely solved for all Seifert fibered spaces.

However, less examples of hyperbolic 3-manifolds have been studied. Basically, the techniques used in Seifert fibered spaces are hard to apply to hyperbolic manifolds.   Until the present paper, it wasn't known whether there is a hyperbolic 3-manifold which admits no fillable contact structures.   Tools in Heegaard Floer homology enable us to find such examples.  In this paper,  we give infinitely many hyperbolic manifolds which do not admit any weakly fillable contact structures.   

\begin{theorem}
\label{thm:main}
Suppose $q\geq4$ is a positive integer.  Let  $M_{q}$ be the $(2q+3)$-surgery on $S^{3}$ along the pretzel knot $P(-2,3,2q+1)$. If $2q+3$ is square free, then $M_{q}$ is a hyperbolic 3-manifold that admits no weakly symplectically fillable contact structures.
\end{theorem}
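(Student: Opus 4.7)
The plan is to separately address the hyperbolicity of $M_q$ and the non-existence of weakly symplectically fillable contact structures. For hyperbolicity, I would appeal to the fact that $P(-2,3,2q+1)$ is itself a hyperbolic knot together with the classification of exceptional surgeries on this family (due to Mattman and subsequent authors); the square-free hypothesis on $2q+3$ is what rules out the remaining lens-space and small Seifert fibered slopes, so that the slope $2q+3$ is not exceptional and $(2q+3)$-surgery produces a hyperbolic 3-manifold.

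The main content lies in the non-fillability statement. The starting observation is that $P(-2,3,2q+1)$ is an L-space knot of Seifert genus $g = q+2$, so that $M_q = S^3_{2q+3}(P(-2,3,2q+1))$, whose slope equals the threshold $2g-1$, is an L-space. Hence $M_q$ is a rational homology sphere whose Heegaard Floer module is controlled entirely by correction terms, and I would compute all $2q+3$ of the $d$-invariants $d(M_q,\mathfrak{s})$ explicitly via the Ozsv\'ath--Szab\'o / Ni--Wu surgery formula, expressing them in terms of the torsion coefficients $V_i$ of the knot read off from the (known) Alexander polynomial of $P(-2,3,2q+1)$.

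To rule out weakly fillable $\xi$ on $M_q$, I would then apply an Ozsv\'ath--Szab\'o style correction-term obstruction: a weak filling of a rational homology sphere can be perturbed to a strong filling (Eliashberg), and a strong filling caps off to produce a closed symplectic 4-manifold to which Seiberg--Witten / Heegaard Floer cobordism inequalities apply. These inequalities bound $d(M_q,\mathfrak{t}_\xi)$ in terms of the first Chern data of the canonical $\spinc$ structure $\mathfrak{t}_\xi$ together with the topology of the filling, and my aim is to show, using the explicit $d$-invariant computation, that \emph{no} $\spinc$ structure on $M_q$ can arise as the $\mathfrak{t}_\xi$ of such a fillable contact structure.

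The hardest step I anticipate is the last: the obstruction must be verified to fail across all $2q+3$ candidate $\spinc$ structures on $M_q$, uniformly in $q \ge 4$. The L-space property keeps the Floer-theoretic side transparent (collapsing $HF^+$ to a sum of towers), so the remaining difficulty is essentially arithmetic --- one must show that the torsion coefficients $V_i$ of $P(-2,3,2q+1)$ grow rapidly enough, and are distributed in the right pattern across $i \in \{0,1,\dots, 2q+2\}$, that the correction-term inequality is violated in every $\spinc$ class for every admissible $q$. The technical heart of the paper will therefore likely be a uniform estimate of these $V_i$'s from the Alexander polynomial of $P(-2,3,2q+1)$, together with a Spin$^c$-by-Spin$^c$ comparison with whatever $d$-invariant upper bound the filling would impose.
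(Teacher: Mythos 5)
Your overall skeleton (hyperbolicity via exceptional surgery classifications; $P(-2,3,2q+1)$ is an L-space knot of genus $q+2$ so the slope $2q+3=2g-1$ yields an L-space; compute all $d$-invariants from the Alexander polynomial and derive a correction-term contradiction) matches the paper, and your $d$-invariant computation step is exactly what the paper does: it shows every $d(M_q,\mathfrak{t})$ is strictly negative. But there are two genuine problems. First, you misplace the square-free hypothesis: it plays no role in hyperbolicity. The paper proves $M_q$ is hyperbolic for \emph{all} $q\geq 4$ (irreducibility via the cabling conjecture for strongly invertible knots, no cyclic surgeries by Mattman, the small Seifert fibered slopes are $4q+6,4q+7$ by Meier, the toroidal slope is $4q+8$ by Wu, then geometrization); the slope $2q+3$ simply avoids these. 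The square-free condition is needed only in the fillability obstruction, as follows.

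Second, and more seriously, your obstruction scheme would not close. You propose to perturb a weak filling to a strong one, cap off, and apply cobordism inequalities bounding $d(M_q,\mathfrak{t}_\xi)$ for the canonical $\spinc$ structure of $\xi$. A per-$\spinc$-structure inequality of the form $c_1(\mathfrak{s})^2+b_2(X)\leq 4d(Y,\mathfrak{s}|_Y)$ gives no contradiction with $d<0$, since a negative-definite $X$ has plenty of $\spinc$ structures with $c_1^2+b_2<0$; and you never establish that the filling is negative definite in the first place. The paper's argument has two ingredients you are missing: (i) Ozsv\'ath--Szab\'o's theorem that an L-space admits no weak symplectic semi-filling with $b_2^+>0$ or disconnected boundary, which forces any filling $X$ of the rational homology sphere $M_q$ to be negative definite; and (ii) the Owens--Strle theorem, which says that if a rational homology sphere $Y$ with $|H_1(Y)|=\delta$ square-free bounds a negative-definite 4-manifold, then $\max_{\mathfrak{t}}4d(Y,\mathfrak{t})\geq 1-1/\delta>0$ --- a statement about the \emph{maximum over all} $\spinc$ structures, not about $\mathfrak{t}_\xi$. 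This is where the square-free hypothesis enters (it substitutes for assuming $H_1(X)$ is torsion-free), and it is exactly what contradicts the negativity of all $d$-invariants. Without these two inputs your plan, as stated, does not reach a contradiction.
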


Since each prime number greater than $9$ is equal to some $2q+3$ for some integer $q$, we can find infinitely many $q$ such that $2q+3$ is square-free. Thus, we have obtained an infinite family of hyperbolic 3-manifolds which admit no weakly symplectically fillable contact structures.  While usually such proofs
resort to Donaldson's celebrated diagonalization theorem or Seiberg-Witten theory, our proof relies on a theorem of Owens-Strle on the correction terms and a theorem of Ozsv\'{a}th-Szab\'{o} on L-spaces in Heegaard Floer theory.

The 3-manifold $M_q$, where $q\geq4$ and $2q+3$ is square-free,  may also serve as a potential example of hyperbolic 3-manifold which admits no tight contact structures. We raise the following question.

\begin{question}
Suppose $q\geq4$, does $M_{q}$ admit a tight contact structure?
\end{question}


By computer experiments, we can find more examples of surgeries on $P(-2,3,2q+1)$ admitting no fillable contact structures, including rational surgeries on knots. Here, we give a proof for the following sequence of rational surgeries on $P(-2,3,7)$. 

\begin{theorem}
\label{thm:P(-2,3,7)}
Suppose $p\geq1$ is an integer. Let $M'_p$ be the $\frac{10p+1}{p}$-surgery on $S^{3}$ along the pretzel knot $P(-2,3,7)$.  If $10p+1$ is square-free, then $M'_p$  is a hyperbolic 3-manifold that admits no weakly symplectically fillable contact structures.
\end{theorem}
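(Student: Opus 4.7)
The plan is to mirror the strategy of Theorem~\ref{thm:main}, with the rational surgery formula replacing the integer one. Three structural facts set the stage. First, $P(-2,3,7)$ is a hyperbolic L-space knot of Seifert genus $g=5$. Second, every slope $\frac{10p+1}{p}=10+\frac{1}{p}\in(10,11]$ strictly exceeds $2g-1=9$, so by the Ozsv\'ath-Szab\'o rational L-space surgery theorem each $M'_p$ is an L-space; moreover, none of these slopes lies in the (finite, known) list of exceptional surgery slopes on $P(-2,3,7)$, so each $M'_p$ is hyperbolic. Third, $|H_1(M'_p;\Z)|=10p+1$ is square-free by hypothesis, which is exactly the arithmetic condition needed to invoke the Owens-Strle obstruction.

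First I would apply the Ozsv\'ath-Szab\'o rational surgery formula for L-space knots to express each $d$-invariant $d(M'_p,\mathfrak{s}_i)$ in the form $d(L(10p+1,p),\mathfrak{s}_{i'})-2V_{n(i)}(P(-2,3,7))$, where the torsion coefficients $V_n$ are read off from the symmetrized Alexander polynomial of $P(-2,3,7)$ and $i\mapsto(i',n(i))$ is the standard indexing. The lens-space piece is then reduced to the $d$-invariants of $L(10p+1,1)$ via the usual recursion $d(L(p,q),\cdot)=-d(L(q,p\bmod q),\cdot)+(\text{correction})$.

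Next, suppose for contradiction that $(M'_p,\xi)$ is weakly symplectically fillable. Since $M'_p$ is a rational homology L-space, the Ohta-Ono deformation reduces us to a strong filling, and a theorem of Ozsv\'ath-Szab\'o then forces the filling to have $b_2^+=0$; after blowing down $(-1)$-spheres one obtains a negative definite $4$-manifold $X$ with $\partial X=M'_p$ carrying an extension of $\mathfrak{t}_\xi$. The Owens-Strle theorem then provides a group isomorphism $\phi:H_1(M'_p)\to\Z/(10p+1)$ such that
\[
d(M'_p,\mathfrak{s}_i)\ \geq\ d(L(10p+1,1),\phi(i)) \qquad \text{for every } i.
\]

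The main obstacle is the combinatorial/arithmetic step of ruling out every such $\phi$. My aim is to exhibit, for each $p\geq1$, a distinguished $\spinc$ structure $\mathfrak{s}_{i_\ast}$ on $M'_p$ whose $d$-invariant lies strictly below \emph{every} value $d(L(10p+1,1),j)$, contradicting the Owens-Strle inequality regardless of the labeling $\phi$. This will require comparing the torsion-coefficient contribution from $P(-2,3,7)$ against the quadratic-residue pattern governing the lens space $d$-invariants, uniformly in $p$; locating a single $i_\ast$ that works across the whole family, in direct parallel to the argument for Theorem~\ref{thm:main}, is the technical heart of the proof.
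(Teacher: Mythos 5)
Your setup is sound up through the reduction to a negative-definite filling: the hyperbolicity and L-space claims, the use of the rational surgery formula \eqref{eq:d-invt_rational} with the $V_s$ read off from the Alexander polynomial, and the passage from a weak filling to a negative-definite $4$-manifold $X$ with $\partial X=M'_p$ all match the paper (the detour through Ohta--Ono and blow-downs is unnecessary but harmless, since \cite[Theorem 1.4]{OzSz3} applies directly to weak semi-fillings of L-spaces). The gap is in how you invoke Owens--Strle. Theorem \ref{thm:nd} does not supply an isomorphism $\phi$ with $d(M'_p,\mathfrak{s}_i)\geq d(L(10p+1,1),\phi(i))$ for every $i$; it is a statement only about the \emph{maximum} of the correction terms: if $M'_p$ bounds a negative-definite $X$ and $\delta=10p+1$ is square-free and odd, then $\max_{\mathfrak t}4d(M'_p,\mathfrak t)\geq 1-1/\delta>0$. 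Moreover, the inequality you wrote has the wrong orientation: what a negative-definite filling gives is a \emph{lower} bound $4d(Y,\mathfrak t)\geq c_1(\mathfrak s)^2+b_2(X)$ for extending $\spinc$ structures, and for the extremal lattice $\Z^{n-1}\oplus\Z(\delta)$ these per-class lower bounds are comparable to $-d(L(\delta,1),\cdot)$, ranging down to roughly $-\delta/4$.

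Consequently the step you identify as the technical heart --- exhibiting one $\mathfrak{s}_{i_\ast}$ whose $d$-invariant lies strictly below every $d(L(10p+1,1),j)$ --- attacks the wrong end of the inequality and cannot produce a contradiction: a negative-definite filling is entirely consistent with some correction terms being very negative, so a single very negative $d$-invariant obstructs nothing. What is needed is the opposite estimate: show that \emph{all} correction terms of $M'_p$ are non-positive. This is what the paper does, computing $V_0=V_1=2$, $V_2=V_3=V_4=1$, $V_s=0$ for $s\geq 5$, substituting into \eqref{eq:d-invt_rational} together with the recursion $d(L(10p+1,p),i)=\frac{(11p-2i)^2}{4p(10p+1)}-\frac{(p-2j)^2}{4p}$, and running a case analysis over the ranges of $i$. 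Then $\max_{\mathfrak t}4d(M'_p,\mathfrak t)\leq 0<1-1/(10p+1)$ contradicts Theorem \ref{thm:nd} for \emph{every} negative-definite filling simultaneously, with no labeling $\phi$ to rule out.
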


Note that there are infinitely many prime integers in form of $10p+1$, due to Dirichlet's theorem on arithmetic progressions. Thus, we have obtained another infinite family of hyperbolic 3-manifold that admit no weakly symplectically fillable contact structures.  By the same method, one can show that the $10$-surgery on $S^3$ along the pretzel knot $P(-2,3,7)$ does not admit any weakly symplectically fillable contact structures either.

The pretzel knot $P(-2,3,7)$ has the 4-ball genus $g_{s}=5$ and the maximal Thurston-Bennequin invariant $TB=9$. By \cite[Theorem 1.1]{LiSt2}, the $r$-surgery on $S^3$ along the pretzel knot $P(-2,3,7)$, $P(-2,3,7)_{r}$, admits a tight contact structure whenever $r>9$. By Theorem \ref{thm:P(-2,3,7)}, all tight contact structures on $M'_p$, where $10p+1$ is square-free, are not weakly symplectically fillable.  Also, Theorem \ref{thm:P(-2,3,7)} provides infinite examples of tight but not fillable contact structures. Admittedly, such examples of contact structures have been constructed on certain toroidal 3-manifolds and some hyperbolic 3-manifolds; see for example \cite{EtH2}, \cite{LiSt1}  and \cite{BEt}.  One should not be confused by the examples in \cite{BEt} with our examples, where it is not known whether their examples admit other fillable structures at all.  

\begin{remark}We compute Heegaard Floer homology with $\mathbb{F}=\mathbb{Z}/2\mathbb{Z}$ coefficients. Thus, by ``L-space", we mean ``$\mathbb{Z}/2\mathbb{Z}$-L-space". The results of Ozsv\'{a}th-Szab\'{o} on L-spaces in \cite{OzSz3} also apply to $\mathbb{Z}/2\mathbb{Z}$-L-spaces.
\end{remark}


\begin{remark}
After writing this paper, we are informed by John Etnyre that Amey Kaloti and B\"{u}lent Tosun have independently  found surgeries on $P(-2,3,2q+1)$ that admit no fillable contact structures \cite{KT}.
\end{remark}

\section*{Acknowledgement}
We are grateful to Ciprian Manolescu, Tye Lidman, and Zhongtao Wu for helpful discussions. We also want to thank John Etnyre and Ko Honda for their comments.  The first author was partially supported by grant no. 11471212 of the National Natural Science Foundation of China.

\section{Preliminaries}
To begin with, we show that $M'_p$, $p\geq1$, and  $M_{q}$, $q\geq4$, are hyperbolic.

\begin{lemma}
\label{lem:L_spaces}
Suppose $q\geq 4$ and $p\geq1$ are integers. Then $M_{q}$ and $M'_p$  are hyperbolic 3-manifolds.
\end{lemma}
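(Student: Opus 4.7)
The plan is to combine Thurston's hyperbolic Dehn surgery theorem with the known classification of exceptional surgery slopes on the pretzel knots $P(-2,3,2q+1)$.

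First, I would confirm that the underlying knots are themselves hyperbolic. For every odd integer $n\geq 7$, the Montesinos knot $P(-2,3,n)$ is not a torus knot, and by Oertel's classification of essential surfaces in Montesinos knot complements it is a small knot whose complement contains no essential torus or annulus. Thurston's geometrization for Haken manifolds then yields a complete hyperbolic structure of finite volume on $S^3\setminus P(-2,3,n)$; in particular $P(-2,3,7)$ and every $P(-2,3,2q+1)$ with $q\geq 3$ are hyperbolic knots.

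Next, I would check that the specified surgery slopes avoid the exceptional set. By Thurston's hyperbolic Dehn surgery theorem only finitely many slopes can yield non-hyperbolic fillings. For $P(-2,3,7)$, the complete list of exceptional slopes is $\{16,\,17,\,18,\,37/2,\,19,\,20\}$, as established by Mattman together with the earlier work of Fintushel--Stern and Bleiler--Hodgson. Since $(10p+1)/p = 10 + 1/p$ lies in $(10,11]$ for every integer $p\geq 1$, these slopes are all strictly less than $16$ and hence non-exceptional, so each $M'_p$ is hyperbolic. For the family $P(-2,3,2q+1)$ with $q\geq 4$, I would invoke Mattman's Culler--Shalen norm bounds, which constrain every exceptional slope to an explicit interval clustered near the Seifert slope, a quantity that grows linearly in $q$ and stays well above the integer $2q+3$; hence each $M_q$ is hyperbolic as well.

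The main obstacle is giving a uniform statement for the infinite family $\{M_q\}_{q\geq 4}$, rather than verifying hyperbolicity case-by-case in software such as SnapPy. Handling this cleanly is exactly where the Culler--Shalen norm estimates do the real work: they separate every exceptional slope of $P(-2,3,2q+1)$ from the small positive slope $2q+3$, uniformly in $q$.
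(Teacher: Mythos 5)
Your overall strategy---verify the knots are hyperbolic and then check that the chosen slopes avoid the exceptional set---is the same as the paper's, and your treatment of $M'_p$ coincides with it: the paper likewise quotes the list $16,17,18,37/2,19,20$ of exceptional slopes of $P(-2,3,7)$ and observes that $\frac{10p+1}{p}$ is not among them.

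The gap is in your handling of the infinite family $M_q$. You assert that Mattman's Culler--Shalen norm bounds ``constrain every exceptional slope'' of $P(-2,3,2q+1)$ to an interval near the Seifert slope, but that is not what those bounds deliver. The Culler--Shalen norm (as used in Mattman's paper) controls only \emph{cyclic and finite} surgery slopes; it says nothing directly about reducible, toroidal, or small Seifert fibered fillings with infinite fundamental group, which are exactly the remaining ways a filling could fail to be hyperbolic. The paper closes each of these doors with a separate input: reducible fillings are excluded because $P(-2,3,2q+1)$ is strongly invertible, so the cabling conjecture holds for it by Eudave-Mu\~{n}oz; lens space fillings are excluded by Mattman's cyclic surgery result; small Seifert fibered fillings occur only at slopes $4q+6$ and $4q+7$ by Meier; and toroidal fillings occur only at slope $4q+8$ by Wu. Since $2q+3$ is none of these, geometrization (not merely Thurston's hyperbolic Dehn surgery theorem, which gives finiteness of the exceptional set but not its location) yields hyperbolicity. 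Without citing results that actually classify the toroidal and Seifert fibered slopes---Meier and Wu, or an equivalent---your uniform-in-$q$ step does not go through. As a secondary point, your appeal to Thurston's theorem alone cannot certify that a \emph{specific} slope is non-exceptional; you must either know the exceptional set explicitly (as above) or rule out each non-hyperbolic geometry for the filled manifold and invoke geometrization, which is what the paper does.
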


\begin{proof}
Note that the pretzel knot $P(-2,3,2q+1)$ is a hyperbolic knot. Obviously, $M_{q}$ is neither $S^{3}$ nor $S^{1}\times S^{2}$. It is easy to know that the pretzel knot $P(-2,3,2q+1)$ is strongly invertible. So by \cite[Theorem 4]{Eu}, the cabling conjecture is true for $P(-2, 3, 2q+1)$. Hence $M_{q}$ is irreducible.  By \cite[Theorem 1.1]{Ma},  $P(-2,3,2q+1)$ admits no non-trivial cyclic surgery.  So $M_{q}$ is not a lens space.  By \cite[Theorem 1.1]{Me}, the small Seifert fibered surgeries on $P(-2,3,2q+1)$ are of slopes $4q+6$ and $4q+7$. So $M_{q}$ is not a small Seifert fibered space.  By \cite[Theorem 1.1]{Wu},  the toroidal surgeries on $P(-2,3,2q+1)$ are of slopes $4q+8$. So  $M_{q}$ is atoroidal. Therefore, according to Thurston's geometrization conjecture which was proved by Perelman, \cite{P1, P2, P3}, $M_{q}$ is hyperbolic.

The exceptional surgery coefficients of the pretzel knot $P(-2,3,7)$ are $16, 17, 18, 37/2, 19, 20$ and $1/0$. So $M'_{p}$ is hyperbolic by the same reason as above.
\end{proof}

According to \cite{OzSz4}, the pretzel knot $P(-2,3,2q+1)$ is an L-space knot for $q \geq 1$. Now we show that $M'_{p}$, $p\geq 1$, and  $M_q$, $q\geq 4$,  are actually L-spaces.  To this end, we recall the symmetrized Alexander polynomial of the pretzel knot $P(-2,3,2q+1)$, $q\geq 1$.

\begin{lemma}[Lemma 2.5, \cite{KL}]
\label{lem:AlexPoly}
The symmetrized Alexander polynomial of the pretzel knot $P(-2,3,2q+1)$ is $(-1)^{q-1}+\sum\limits_{j=1}^{q-1}(-1)^{q-1-j}(t^{j}+t^{-j})-(t^{q+1}+t^{-q-1})+(t^{q+2}+t^{-q-2})$, where $q\geq1$ is an integer.
\end{lemma}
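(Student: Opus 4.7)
The proof plan is to compute the Alexander polynomial directly from a Seifert matrix of an explicit Seifert surface for $P(-2,3,2q+1)$. Apply Seifert's algorithm to the standard pretzel diagram; tracking orientations, one finds that the algorithm produces exactly three Seifert circles, so the resulting surface $F$ has Euler characteristic $\chi(F) = 3 - (2q+6) = -(2q+3)$ and hence genus $g(F) = q+2$. This agrees with the classical pretzel-genus formula $g(P(p_1,p_2,p_3)) = 1 + \frac{1}{2}(|p_1|-2 + |p_2|-1 + |p_3|-1)$ for pretzel knots with a single even parameter, and also with the degree of the claimed polynomial (as expected for an L-space knot).

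Next, choose a basis $\{e_1, \dots, e_{2q+4}\}$ of $H_1(F;\Z) \cong \Z^{2q+4}$ made up of loops, one for each pair of adjacent bands in the $-2$-, $3$-, and $(2q+1)$-twist regions. With respect to this basis the Seifert matrix $V$ splits into three blocks corresponding to the three tangles: two small constant blocks from the $-2$- and $3$-tangles, and a bidiagonal $2q \times 2q$ block with entries $\pm 1$ from the $(2q+1)$-tangle, whose off-diagonal terms record the linking of successive band loops. The symmetrized Alexander polynomial is then $\Delta(t) = \det(t^{1/2} V - t^{-1/2} V^T)$, which a priori lies in $\Z[t^{1/2}, t^{-1/2}]$ but which the sign pattern of $V - V^T$ forces into $\Z[t, t^{-1}]$.

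To evaluate the determinant in closed form, I would cofactor-expand along the bidiagonal block, reducing to a two-term linear recursion $D_k = f(t) D_{k-1} + g(t) D_{k-2}$ for the size-$k$ minors, with $f, g \in \Z[t, t^{-1}]$. Solving the recursion produces the alternating tail $\sum_{j=1}^{q-1}(-1)^{q-1-j}(t^j+t^{-j})$ in the claimed formula, while the constant term $(-1)^{q-1}$ and the top/second terms $\pm(t^{q+1}+t^{-q-1})$, $(t^{q+2}+t^{-q-2})$ arise from boundary contributions of the $-2$- and $3$-tangle blocks. A parallel option is induction on $q$: the skein relation at a crossing inside the $(2q+1)$-twist region relates $\Delta_q(t) - \Delta_{q-1}(t)$ to the Alexander polynomial of the oriented smoothing, which is a simpler pretzel link whose polynomial one computes separately; the base case $q=1$ is $P(-2,3,3) = T(3,4)$, whose Alexander polynomial $t^3-t^2+1-t^{-2}+t^{-3}$ matches the formula. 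The main obstacle in either route is bookkeeping: maintaining consistent orientation conventions and linking-number signs across the three twist regions so that the Seifert matrix is written down correctly. Once $V$ is in hand, the cofactor recursion is routine.
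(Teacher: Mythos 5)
The paper does not prove this lemma at all: it is imported verbatim from Kim--Lee \cite{KL} (their Lemma 2.5), so there is no internal argument to compare against. Your plan --- an explicit Seifert surface from the standard pretzel diagram, a block Seifert matrix, and a cofactor recursion for $\det(t^{1/2}V-t^{-1/2}V^{T})$ --- is essentially the computation carried out in the cited source, and your numerology is consistent: the base case $q=1$ gives $P(-2,3,3)=T(3,4)$ with $\Delta=t^{3}-t^{2}+1-t^{-2}+t^{-3}$ as you say (one can also check $q=2$, where $P(-2,3,5)=T(3,5)$ matches the formula), and genus $q+2$ agrees with the degree of the claimed polynomial.

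That said, as written this is an outline rather than a proof, and two points deserve more than the ``routine bookkeeping'' label you give them. First, everything hinges on actually exhibiting the Seifert matrix: the claim that Seifert's algorithm yields exactly three circles (hence $b_1(F)=2q+4$), and the asserted block/bidiagonal structure of $V$, depend on the orientation of the strands in each twist region and are not verified; the ``classical pretzel-genus formula'' you invoke is being used to confirm a computation you have not yet done. Second, the skein-theoretic alternative is understated. Since $\Delta_{q}-\Delta_{q-1}$ has degree growing with $q$, the oriented resolution $L_{0}$ at a crossing of the $(2q+1)$-twist region cannot be a single fixed link: the two strands there are parallel, so $L_{0}$ is the two-component pretzel link $P(-2,3,2q)$, whose Conway polynomial is itself a $q$-dependent sequence needing its own recursion. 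The induction is therefore a coupled pair of recursions (knots and two-component links alternating), not one recursion plus a separately computed constant term. Neither issue is fatal --- both routes close up into a solvable second-order linear recursion --- but until the Seifert matrix or the coupled recursion is actually written down and solved, the formula has been checked only in low-degree cases, not proved.
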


For any rational number $r$, we denote the $r$-surgery on $K$ by $K_r$. According to \cite{OwSt1}, when $r$ is an integer, the $\spinc$ structures on $K_r$ can be indexed by integers $i$'s with $|i|\leq r/2$.

\begin{lemma}
Suppose $q\geq 4$ and $p\geq1$ are integers. Then $M_{q}$ and $M'_p$  are L-spaces.
\end{lemma}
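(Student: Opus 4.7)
The plan is to invoke the Ozsv\'ath--Szab\'o criterion for L-space surgeries on L-space knots: if $K\subset S^3$ is an L-space knot, then $S^3_{p/q}(K)$ is an L-space if and only if $p/q\geq 2g(K)-1$, where $g(K)$ denotes the Seifert genus. The paper has already noted, citing \cite{OzSz4}, that $P(-2,3,2q+1)$ is an L-space knot for every $q\geq 1$, so the only thing to check is a numerical comparison between the surgery slope and $2g-1$.

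First I would read off the genus of $K_q=P(-2,3,2q+1)$ from Lemma 2.5. For an L-space knot the Seifert genus equals the degree of the symmetrized Alexander polynomial, and inspection of the formula shows the highest power of $t$ appearing with nonzero coefficient is $t^{q+2}$. Hence $g(K_q)=q+2$ and consequently $2g(K_q)-1=2q+3$.

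For $M_q$, the surgery slope is exactly $2q+3$, so $2q+3\geq 2g(K_q)-1$ holds with equality, and the Ozsv\'ath--Szab\'o criterion immediately yields that $M_q$ is an L-space. For $M'_p$, specializing to $q=3$ gives $g(P(-2,3,7))=5$ and $2g-1=9$; the surgery slope is
\[
\tfrac{10p+1}{p}=10+\tfrac{1}{p}\geq 10>9,
\]
so the criterion again applies and $M'_p$ is an L-space.

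I do not expect any substantive obstacle: once the L-space knot property is granted from \cite{OzSz4} and the genus is extracted from Lemma 2.5, the result is a one-line comparison of rationals. The only mild care needed is to cite the rational-slope version of the Ozsv\'ath--Szab\'o L-space surgery theorem (as opposed to the integer-slope version) when treating $M'_p$, and to note that the remark at the start of the paper ensures the theorem applies in the $\mathbb{Z}/2\mathbb{Z}$ setting used throughout.
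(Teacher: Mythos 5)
Your proposal is correct and follows essentially the same route as the paper: both extract the genus $g=q+2$ from the leading term of the symmetrized Alexander polynomial in Lemma 2.2 (using the fact that $P(-2,3,2q+1)$ is an L-space knot) and then apply the rational-slope Ozsv\'ath--Szab\'o criterion that $K_r$ is an L-space for $r\geq 2g(K)-1$, the paper citing \cite[Proposition 9.6]{OzSz6} for this step. The numerical checks (equality $2q+3=2g-1$ for $M_q$, and $10+\tfrac{1}{p}>9$ for $M'_p$) match the paper's argument.
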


\begin{proof}
For any L-space knot, by \cite[Theorem 1.2]{OzSz3} and \cite[Theorem 1.2]{OzSz4}, the Seifert genus is the power of the leading term in the symmetrized Alexander polynomial. So by Lemma~\ref{lem:AlexPoly}, the Seifert genus of $P(-2,3,2q+1)$ is $q+2$, for $q\geq1$.  According to \cite[Proposition 9.6]{OzSz6}, $M'_p$, $p\geq1$, and $M_{q}$, $q\geq4$, are L-spaces.  In fact, for any L-space knot $K$, the surgery manifold $K_{r}$ with $r\in \mathbb{Q}$ is an L-space for all $r\geq 2g(K)-1$, where $g(K)$ is the Seifert genus of $K$.
\end{proof}

We will show that any of  $M_q$ with $q\geq4$ and $M'_p$ with $p\geq1$  cannot bound negative-definite 4-manifolds. We utilize the following theorem of Owens and Strle.

\begin{theorem}[Theorem 2, \cite{OwSt1}]
\label{thm:nd}
Let $Y$ be a rational homology sphere with $|H_1(Y;\mathbb{Z})|=\delta$. If $Y$ bounds a negative-definite 4-manifold $X$, and if either $\delta$ is square-free or there is no torsion in $H_1(X;\mathbb{Z})$, then
\[
\max_{\mathfrak{t}\in \spinc(Y)} 4d(Y, \mathfrak{t})\geq
\begin{cases}
1-1/\delta &\mbox{if $\delta$ is odd,}\\
1                                &\mbox{if $\delta$ is even.}
\end{cases}
\]
\end{theorem}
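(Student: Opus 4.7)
The plan is to combine the Ozsv\'{a}th--Szab\'{o} correction-term inequality for negative-definite fillings with a lattice-theoretic bound on characteristic covectors in non-unimodular forms. The basic inequality I would use states that if $X$ is a negative-definite 4-manifold with $\partial X = Y$ a rational homology sphere, and $\mathfrak{s}$ is a spin$^c$ structure on $X$ restricting to $\mathfrak{t}$ on $Y$, then
\[
c_1(\mathfrak{s})^2 + b_2(X) \leq 4d(Y, \mathfrak{t}).
\]
To bound $\max_{\mathfrak{t}} 4d(Y,\mathfrak{t})$ from below it therefore suffices to exhibit a spin$^c$ structure $\mathfrak{s}$ on $X$ for which $c_1(\mathfrak{s})^2 + b_2(X)$ is as large as possible.

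First I would set up the relevant lattice. Let $(L, Q)$ denote $H^2(X;\mathbb{Z})/\mathrm{torsion}$ equipped with the negative-definite intersection form, of rank $n = b_2(X)$. The discriminant group $L^*/L$ fits into the standard long exact sequence relating it to $H_1(Y;\mathbb{Z})$ and the torsion in $H_1(X;\mathbb{Z})$. The hypothesis that $\delta$ is square-free or that $H_1(X;\mathbb{Z})$ is torsion-free ensures $|L^*/L|=\delta$ and, moreover, that every characteristic element $\xi \in L^*$ is the image of $c_1(\mathfrak{s})$ for some spin$^c$ structure $\mathfrak{s}$ on $X$; restricting to $Y$ then realizes every residue class of the characteristic coset modulo $2L$ as some $\mathfrak{t}\in\spinc(Y)$.

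The technical heart of the proof is a strengthening of Elkies' theorem to the non-unimodular setting: for any negative-definite lattice $(L,Q)$ of rank $n$ and discriminant $\delta$ there is a characteristic element $\xi \in L^*$ with
\[
\xi^2 + n \;\geq\; 1 - \tfrac{1}{\delta} \text{ if } \delta \text{ odd}, \qquad \xi^2 + n \;\geq\; 1 \text{ if } \delta \text{ even}.
\]
I would prove this by averaging $\xi^2$ over an affine translate $\xi_0 + 2L$ of the characteristic coset for a fixed reference characteristic $\xi_0$, and then sharpening via a parity/pigeonhole argument that exploits the linking form on $L^*/L$. The dichotomy $1-1/\delta$ versus $1$ comes precisely from whether the characteristic coset of the discriminant form does or does not contain the zero class, which is controlled by the parity of $\delta$.

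The main obstacle is the lattice-theoretic statement in the previous paragraph. The square-free or torsion-free hypothesis is essential: without it the discriminant form can carry large $2$-elementary summands or be obstructed by torsion in $H_1(X;\mathbb{Z})$, the identification $|L^*/L|=\delta$ breaks down, and the averaging bound degrades. Once this lattice statement is in hand, the theorem follows immediately by choosing $\mathfrak{s}$ with $c_1(\mathfrak{s})=\xi$, setting $\mathfrak{t}=\mathfrak{s}|_Y$, and combining with the displayed Heegaard Floer inequality.
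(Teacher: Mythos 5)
First, a point of context: the paper you are working from does not prove this statement at all --- it is quoted verbatim as Theorem 2 of \cite{OwSt1}, so the comparison must be with Owens and Strle's original argument. Your overall architecture does match theirs: they likewise deduce the theorem by combining the Ozsv\'ath--Szab\'o inequality $c_1(\mathfrak{s})^2+b_2(X)\leq 4d(Y,\mathfrak{s}|_Y)$ for negative-definite bounding manifolds with a lattice-theoretic statement about short characteristic covectors, and the role you assign to the hypothesis is essentially right: ``$\delta$ square-free or $H_1(X;\mathbb{Z})$ torsion-free'' enters only to guarantee $|L^*/L|=\delta$, since in general $\delta=|\det Q|\cdot t^2$ for an integer $t$ coming from the torsion of $H_1(X;\mathbb{Z})$, and if $t>1$ the lattice bound would only give the weaker constant $1-1/\det Q$, possibly even with the wrong parity. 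One correction to your bookkeeping: the surjectivity of $\mathfrak{s}\mapsto c_1(\mathfrak{s})$ onto the set of characteristic covectors needs no hypothesis at all --- the image of $c_1$ modulo torsion is a full coset of $2L^*$ (not of $2L$), which is exactly the characteristic coset --- so the square-free/torsion-free assumption should not be invoked for that step.

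The genuine gap is at what you yourself call the technical heart. The claim that every negative-definite lattice of rank $n$ and determinant $\delta$ admits a characteristic covector $\xi$ with $\xi^2+n\geq 1-1/\delta$ ($\delta$ odd) resp.\ $\geq 1$ ($\delta$ even) is not a lemma obtainable by ``averaging $\xi^2$ over an affine translate of the characteristic coset'' plus pigeonhole: it is Theorem 1 of \cite{OwSt1}, the main theorem of that paper, and it is sharp, with equality characterizing $\mathbb{Z}^{n-1}\oplus\mathbb{Z}(\delta)$. Concretely, the averaging step fails: the characteristic coset is infinite and the average of $\xi^2$ over it is $-\infty$ in a negative-definite lattice, while averaging over any natural finite set of coset representatives produces a defect that grows with the lattice rather than the uniform sharp constants above. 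Indeed, for $\delta=1$ your lemma is precisely Elkies' theorem ($\xi^2+n\geq 0$ for definite unimodular lattices), for which no elementary averaging or pigeonhole proof is known --- Elkies' proof uses theta series and modular forms --- and the extension to determinant $\delta$, with the odd/even dichotomy and the equality case, requires the further delicate lattice-theoretic analysis that occupies the bulk of \cite{OwSt1}. So your proposal correctly reduces the theorem to the right key statement and handles the topological translation properly, but the proposed proof of that key statement does not work, and that statement carries essentially all of the content of the theorem.
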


Suppose $K$ is an L-space knot.  Then the d-invariants of integral surgeries on $K$ can be computed by the following formula.

\begin{theorem}[Therem 6.1, \cite{OwSt1}]
\label{eq:d-invt_L_space_knot}
Given $n\in \mathbb{N}$ and  $\forall | i |\leq n/2,$ we have
\begin{align*}
                       d(K_{n},i)&=d(U_{n},i)-2t_{i}(K)\\
&=\frac{(n-2|i|)^2}{4n}-\frac{1}{4}-2t_i(K),
\end{align*}
where $U$ is the unknot,  the torsion coefficient \[t_i(K)=\sum_{j>0}j a_{|i|+j},\] and $a_{k}$ is the coefficient of $t^k$ in the normalized symmetrized Alexander polynomial $\Delta_{K}(t)$ of $K$.
\end{theorem}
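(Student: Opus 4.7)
The plan is to combine the Ozsv\'ath--Szab\'o large surgery formula with the explicit description of $CFK^\infty(K)$ for an L-space knot $K$. Implicit in the statement is the assumption $n\geq 2g(K)-1$, since $K_n$ is an L-space precisely in this range, and the formula is being invoked there.

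First I would apply the large surgery formula: for $n\geq 2g(K)-1$ and any $i$ with $|i|\leq n/2$, the Heegaard Floer homology $HF^+(K_n,\mathfrak{t}_i)$ is isomorphic, as a graded $\mathbb{F}[U]$-module up to an explicit overall grading shift, to $H_*(A_i^+)$, where $A_i^+$ is the usual subquotient of $CFK^\infty(K)$ associated to the Alexander grading $i$. Applying the same formula to the unknot $U$, for which $A_i^+(U)\cong \mathcal{T}^+$ with bottom generator in degree zero, recovers the lens space computation $d(U_n,i)=\frac{(n-2|i|)^2}{4n}-\frac{1}{4}$. In particular this value is the overall grading shift produced by the large surgery formula for \emph{any} knot with the same surgery slope and $\spinc$-label $i$.

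Second, I would invoke the Ozsv\'ath--Szab\'o structural theorem for L-space knots: $CFK^\infty(K)$ is a staircase complex determined entirely by the symmetrized Alexander polynomial $\Delta_K(t)=\sum a_k t^k$. Reading off the staircase, one sees that $H_*(A_i^+)\cong \mathcal{T}^+$ and that the bottom of the tower sits in grading $-2t_i(K)$, where $t_i(K)=\sum_{j>0} j\,a_{|i|+j}$ is the $i$-th torsion coefficient. The absolute value $|i|$ reflects the conjugation symmetry $d(Y,\mathfrak{t})=d(Y,\bar{\mathfrak{t}})$ of the d-invariant, which on the knot side becomes $t_{-i}=t_i$; it suffices therefore to work with $i\geq 0$.

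Combining these steps, $d(K_n,i)$ is the grading of the bottom of the $\mathcal{T}^+$-tower in $HF^+(K_n,\mathfrak{t}_i)$, which equals the large-surgery grading shift $d(U_n,i)$ plus the internal shift $-2t_i(K)$ inside $H_*(A_i^+)$; this is exactly the claimed formula. The main obstacle is the careful bookkeeping of grading conventions across the passage from $CFK^\infty$ to $A_i^+$ to the large surgery isomorphism to $HF^+(K_n,\mathfrak{t}_i)$, together with fixing the normalization of $\Delta_K(t)$ and the identification of $\spinc$-structures $\mathfrak{t}_i$ with integers $|i|\leq n/2$. Once these conventions are pinned down, the torsion-coefficient expression $-2t_i(K)$ drops out of the staircase computation directly, and the second equality follows by substituting $d(U_n,i)=\frac{(n-2|i|)^2}{4n}-\frac{1}{4}$.
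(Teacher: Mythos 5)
The paper offers no proof of this statement to compare against: it is imported verbatim as Theorem~6.1 of Owens--Strle \cite{OwSt1}. Your sketch is a correct outline of the standard argument (essentially the one in the cited source): the large surgery formula identifies $HF^{+}(K_{n},\mathfrak{t}_{i})$ with $H_{*}(A_{i}^{+})$ up to the grading shift $d(U_{n},i)=\frac{(n-2|i|)^{2}}{4n}-\frac{1}{4}$, and the staircase structure of $CFK^{\infty}$ of an L-space knot places the bottom of the resulting tower at $-2t_{|i|}(K)$. Your reading of the implicit hypothesis $n\geq 2g(K)-1$ is consistent with the paper's application, where $2q+3=2g(P(-2,3,2q+1))-1$ sits exactly on the boundary of the large-surgery range (though via the Ni--Wu formula quoted later in the paper the conclusion in fact holds for all $n\geq 1$). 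The one step you assert rather than carry out --- that the internal shift equals $-2t_{i}(K)$ with $t_{i}(K)=\sum_{j>0}ja_{|i|+j}$ --- is precisely the identity $V_{|i|}=t_{|i|}(K)$ for L-space knots; note that the paper proves exactly this in Lemma~\ref{lem:computing_V_s}, whose formula $V_{s}=\sum_{i\geq s+1}a_{i}$ (with $a_{i}$ the coefficients of $\tfrac{t}{t-1}\Delta_{K}(t)$) is an Abel-summation rewriting of $\sum_{j>0}ja_{s+j}$, so your plan could be completed by the same $A_{s}^{-}$/Euler-characteristic computation used there. One bookkeeping point to watch: the bottom of the tower is governed by $\max\{V_{i},H_{i-n}\}=\max\{V_{i},V_{n-i}\}$, which collapses to $V_{|i|}$ only after passing to the symmetric labelling $|i|\leq n/2$; your appeal to the conjugation symmetry $t_{-i}=t_{i}$ handles this correctly.
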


For positive rational surgeries on a knot $K$, the d-invariants can be computed by the following formula given by Ni and Wu.
\begin{proposition}[Proposition 1.6, \cite{NW}]
Suppose $g,h$ are positive coprime integers and fix $0\leq i\leq g-1.$ Then,
\[d(K_{g/h},i)=d(L(g,h),i)-2 \max\{V_{\lfloor\frac{i}{h}\rfloor},H_{\lfloor\frac{i-g}{h}\rfloor}\}.\]
\end{proposition}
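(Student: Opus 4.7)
The plan is to derive the formula from the Ozsv\'{a}th-Szab\'{o} rational surgery mapping cone formula for Heegaard Floer homology, which computes $HF^+(K_{g/h}, i)$ as the homology of a mapping cone $\Phi^+_i : \mathbb{A}^+_i \to \mathbb{B}^+_i$, where $\mathbb{A}^+_i = \bigoplus_{s\in \mathbb{Z}} A^+_{\lfloor (i+sg)/h \rfloor}$ is a direct sum of subquotient complexes of $CFK^\infty(K)$, $\mathbb{B}^+_i = \bigoplus_{s\in \mathbb{Z}} B^+$ is a direct sum of copies of the standard tower $\mathcal{T}^+$, and $\Phi^+_i$ is assembled from the vertical truncation maps $v^+_s : A^+_s \to B^+$ and the horizontal truncation maps $\mathfrak{h}^+_s : A^+_s \to B^+$. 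The key numerical input is that the $U$-tower in the image of $v^+_s$ sits in degree shifted by $-2V_s$ below the target tower, and similarly for $\mathfrak{h}^+_s$ with $-2H_s$.

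First I would establish the baseline. Replacing $K$ by the unknot $U$, all complexes $A^+_s$ and $B^+$ become standard towers, all $V_s$ and $H_s$ vanish, and the same cone recovers $HF^+(L(g,h), i)$; this exhibits the term $d(L(g,h), i)$ as the baseline of the formula. For a general $K$, the $\mathbb{B}^+_i$ part of the cone is unchanged, so the $d$-invariant can only be shifted downwards by a contribution coming from the $\mathbb{A}^+_i$ side through the maps $v^+_s$ and $\mathfrak{h}^+_s$.

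Next I would carry out the combinatorial analysis of which $A^+_s$ summands control the shift. The essential monotonicity is that $V_0 \ge V_1 \ge \cdots \ge 0$ and $\cdots \le H_{-1} \le H_0$ with $V_s, H_{-s} \to 0$ as $s \to +\infty$. In the cone, the maps $v^+_s$ and $\mathfrak{h}^+_s$ connect each $A^+_s$ to two consecutive $B^+$ summands, so interior $A^+_s$ components telescope into the baseline and only the two boundary indices of the $A$-sum introduce genuine new shifts. Tracing the affine indexing $s \mapsto \lfloor (i+sg)/h \rfloor$ for $0 \le i \le g-1$, the extremal index controlling $v$ is $\lfloor i/h \rfloor$ (at $s=0$) and the extremal index controlling $\mathfrak{h}$ is $\lfloor (i-g)/h \rfloor$ (at $s=-1$). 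Because the surviving nontorsion tower sits at the larger of the two possible shifts, the correction to the baseline is exactly $-2\max\{V_{\lfloor i/h \rfloor}, H_{\lfloor (i-g)/h \rfloor}\}$, which is the claimed formula.

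The principal obstacle is the $\spinc$-bookkeeping: one must check that the Ozsv\'{a}th-Szab\'{o} identification of $\spinc$ structures on $K_{g/h}$ transported through the rational surgery formula reduces to the residues $0 \le i \le g-1$ and produces precisely the floor expressions $\lfloor i/h \rfloor$ and $\lfloor (i-g)/h \rfloor$. A secondary technical point is verifying the telescoping cancellation so that exactly one $V$ and one $H$ survive in each $\spinc$ structure; all remaining $A^+_s$ components must be shown to contribute only to the torsion part of $HF^+$ and hence to leave the $d$-invariant unchanged. These are the places where the original Ni-Wu argument requires the most care, but once the spin$^c$ labels and the extremal indices are matched, the rest of the proof is a direct reading of the $U$-tower in the mapping cone.
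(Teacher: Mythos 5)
The paper does not prove this statement at all: it is quoted verbatim as Proposition 1.6 of Ni--Wu \cite{NW}, so there is no in-paper argument to compare against. Measured against the original source, your sketch follows essentially the same route as Ni and Wu: the Ozsv\'{a}th--Szab\'{o} rational surgery mapping cone, the grading baseline obtained by comparing with the unknot (whose cone computes $HF^+(L(g,h))$ with all $V_s=H_s=0$), the monotonicity $V_0\geq V_1\geq\cdots\to 0$ and $\cdots\leq H_{-1}\leq H_0$ with $H_s\to 0$ as $s\to-\infty$, and the identification of the extremal indices $\lfloor i/h\rfloor$ at $s=0$ and $\lfloor (i-g)/h\rfloor$ at $s=-1$ as the binding ones. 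The two places where your outline asserts rather than argues are exactly where the real work lies: (i) the claim that the interior $A^+_s$ summands ``telescope'' and contribute only torsion requires the truncation of the cone for \emph{positive} slopes $g/h>0$ and an explicit analysis of the map induced on the nontorsion towers (surjectivity in high degrees, one-dimensional kernel per $\mathrm{Spin}^c$ structure); and (ii) the appearance of the \emph{maximum} of $V_{\lfloor i/h\rfloor}$ and $H_{\lfloor (i-g)/h\rfloor}$, rather than some other combination, comes from the fact that an element of the surviving tower must be simultaneously liftable through both truncation maps at the central indices, so the larger $U$-power is the obstruction. If you supply those two verifications, your proposal is a faithful reconstruction of the Ni--Wu proof rather than an alternative to it.
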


Here, $V_s$ and $H_s$ with $s \in \mathbb{Z}$ are knot invariants coming from knot Floer complex; see \cite{NW}. In fact, $H_{s}=V_{-s}$. Thus, we have the following formula,
\begin{equation}
\label{eq:d-invt_rational}
d(K_{g/h},i)=d(L(g,h),i)-2 \max\{V_{\lfloor\frac{i}{h}\rfloor},V_{-\lfloor\frac{i-g}{h}\rfloor}\}.
\end{equation}

\section{Integral surgery on $P(-2, 3, 2q+1)$}

We compute the torsion coefficients and the d-invariants of $M_q$ as follows.

\begin{lemma}
\label{lem:t_j}
For any integer $k\geq 1$, the pretzel knot $K=P(-2,3,4k+3)$ has the following formulas for the torsion coefficients $t_j(K)$:
\[
t_j(K)=\begin{cases} k+1-\lfloor\frac{j}{2}\rfloor &\mbox{$j=0,\ldots,2k+1,$}\\
                     1 &\mbox{$j=2k+2,$}\\
                     0 &\mbox{$j\geq 2k+3.$}
\end{cases}
\]

For any integer $k\geq 1$, the pretzel knot $K=P(-2,3,4k+1)$ has the following formulas for the torsion coefficients $t_j(K)$:
\[
t_j(K)=\begin{cases} k+1-\lfloor\frac{j+1}{2}\rfloor &\mbox{$j=0,\ldots,2k,$}\\
                     1 &\mbox{$j=2k+1,$}\\
                     0 &\mbox{$j\geq 2k+2.$}
\end{cases}
\]

\end{lemma}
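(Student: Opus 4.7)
The plan is to substitute the explicit Alexander polynomial from Lemma \ref{lem:AlexPoly} directly into the definition of the torsion coefficient $t_i(K)=\sum_{j>0}j\,a_{|i|+j}$ recorded in Theorem \ref{eq:d-invt_L_space_knot}. Rather than resum the series by hand at every $i$, I would first establish the one-line telescoping identity
\begin{equation*}
t_i(K)-t_{i+1}(K)=\sum_{m\geq i+1}a_m \;=:\; S_{i+1},
\end{equation*}
which follows immediately from reindexing. Since $a_m=0$ for $m\geq q+3$ forces $t_i(K)=0$ for $i\geq q+2$, one can then recover every $t_i$ by the downward recursion $t_i=t_{i+1}+S_{i+1}$, provided the tail sums $S_m$ are known.

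For $P(-2,3,4k+3)$ (so $q=2k+1$), Lemma \ref{lem:AlexPoly} gives $a_m=(-1)^m$ for $1\leq m\leq 2k$, then $a_{2k+1}=0$, $a_{2k+2}=-1$, $a_{2k+3}=1$. Because of the alternation, the tail sums collapse: $S_m=1$ exactly when $m$ is even with $2\leq m\leq 2k$ or when $m=2k+3$, and $S_m=0$ otherwise. Plugging this into the downward recursion yields $t_{2k+3}=0$, $t_{2k+2}=1$, and then the pairs $(t_{2k+1},t_{2k})=(1,1)$, $(t_{2k-1},t_{2k-2})=(2,2)$, and so on, which is exactly the stated formula $t_j=k+1-\lfloor j/2\rfloor$ on $0\leq j\leq 2k+1$.

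For $P(-2,3,4k+1)$ (so $q=2k$), the overall sign flips: now $a_m=(-1)^{m-1}$ for $1\leq m\leq 2k-1$, $a_{2k}=0$, $a_{2k+1}=-1$, $a_{2k+2}=1$. The same tail-sum bookkeeping gives $S_m=1$ precisely when $m$ is odd with $1\leq m\leq 2k-1$ or when $m=2k+2$, and $S_m=0$ elsewhere. The downward recursion then produces $t_{2k+2}=0$, $t_{2k+1}=1$, followed by $(t_{2k},t_{2k-1})=(1,1)$, $(t_{2k-2},t_{2k-3})=(2,2)$, \ldots, matching $t_j=k+1-\lfloor(j+1)/2\rfloor$ on $0\leq j\leq 2k$. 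The only subtle point is the parity shift between the two families, which is exactly what accounts for the different floor functions; beyond that, the whole argument is purely combinatorial and requires no input beyond the explicit polynomial and the telescoping identity.
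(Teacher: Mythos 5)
Your proposal is correct and follows essentially the same route as the paper: both substitute the Alexander polynomial of Lemma \ref{lem:AlexPoly}, use the telescoping recursion $t_{j-1}(K)-t_j(K)=\sum_{i\geq j}a_i$ together with the vanishing of $t_j$ for large $j$, and evaluate the alternating tail sums to run the recursion downward. Your computed coefficients and tail sums agree with the paper's, and you carry out the $P(-2,3,4k+1)$ case explicitly where the paper only says ``similarly.''
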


\begin{proof}Suppose $K=P(-2,3,4k+3)$.
By definition, we have the recursion relation for $t_j(K)$,
$$t_{j-1}(K)=t_j(K)+\sum_{i\geq j}a_i(K), \quad \forall j\geq 1.$$
 By Lemma \ref{lem:AlexPoly}, we have that
\[\sum_{i\geq j}a_i(K)=\begin{cases}1 &\mbox{ if $j=2k+3,$ or  $j=2i$ for all $0\leq i\leq k,$}\\
                                 0 &\mbox{ if $j=2k+2,$ or  $j=2i+1$ for all $0\leq i \leq k.$}
                                 \end{cases}\]
It is also easy to see $t_j(K)=0$ for all $j\geq 2k+3$ and $t_{2k+2}(K)=1.$ Combing this with the recursion formula, we can prove the result for $K=P(-2,3,4k+3)$.

Similarly, we can prove the results for $K=P(-2,3,4k+1)$.
\end{proof}

\begin{lemma}
\label{lem:d_jq}
For all integers $q\geq 4$, all the d-invariants on $M_{q}$ are negative.
\end{lemma}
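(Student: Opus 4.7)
The plan is to apply the d-invariant formula for L-space knots (Theorem \ref{eq:d-invt_L_space_knot}) with $n = 2q+3$ and $K = P(-2,3,2q+1)$, together with the explicit torsion coefficients from Lemma \ref{lem:t_j}. This yields
$$d(M_q, i) = \frac{(2q+3 - 2|i|)^{2}}{4(2q+3)} - \frac{1}{4} - 2\,t_{|i|}(K), \qquad |i| \leq q+1,$$
so the task reduces to a purely algebraic verification that the positive first two terms never outweigh $2\,t_{|i|}(K)$.

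I would split into cases by the parity of $q$, matching the two cases of Lemma \ref{lem:t_j}. Suppose first $q = 2k+1$, so that $2q+1 = 4k+3$ and $n = 4k+5$. The extremal index $|i| = 2k+2$ is handled directly, using $t_{|i|} = 1$ to obtain $d = \frac{1}{4(4k+5)} - \frac{9}{4} < 0$. For $|i| = m \leq 2k+1$ set $a = k - \lfloor m/2\rfloor \in \{0,1,\ldots,k\}$, so that $t_m(K) = a+1$. Substituting into the d-invariant formula, clearing the denominator $4(4k+5)$, and separating the subcases $m = 2j$ and $m = 2j+1$, the numerator collapses to $4(4a^{2} - 8ak - 9k - 5)$ and $4(4a^{2} - 8ak - 4a - 9k - 9)$ respectively. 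Since $0 \leq a \leq k$ gives $4a^{2} \leq 4ak \leq 8ak$, both numerators are strictly negative.

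The parallel case $q = 2k$, where $n = 4k+3$, proceeds in exactly the same manner using the second formula of Lemma \ref{lem:t_j}; after the analogous substitution one obtains numerators of the form $8a^{2} - 16ak - (\text{nonnegative})$, again strictly negative for $0 \leq a \leq k$.

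The argument is essentially bookkeeping with comfortable slack: the dominant contribution $-2\,t_{|i|}(K)$ easily beats $\frac{(n-2|i|)^{2}}{4n} - \frac{1}{4} \leq \frac{q+1}{2}$. The only place that requires a little care is the boundary index $|i| = q+1$, where $t_{|i|}$ drops to $1$ and breaks the linear pattern exploited throughout the rest of the range; however, as indicated above, that case is immediate, so it poses no real obstruction.
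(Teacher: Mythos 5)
Your proposal is correct and follows essentially the same route as the paper: both apply Theorem \ref{eq:d-invt_L_space_knot} with the torsion coefficients of Lemma \ref{lem:t_j}, split by the parity of $q$, treat the extremal index $|i|=q+1$ separately, and reduce the rest to elementary algebra. The only difference is cosmetic: the paper bounds $\lfloor j/2\rfloor\leq j/2$ so the linear terms cancel and one gets the single estimate $-k-1+\frac{j^2}{4k+5}<0$, whereas you reparametrize by $a=k-\lfloor m/2\rfloor$, split by the parity of the index, and check the sign of the resulting explicit numerators, which I have verified come out as you state.
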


\begin{proof}
Suppose $q=2k+1$. By Lemma \ref{lem:t_j} and Theorem \ref{eq:d-invt_L_space_knot}, for all $0\leq j\leq 2k+1,$
\begin{align*}
d(K_{4k+5},\pm j)&=\frac{(4k+5-2j)^2}{4(4k+5)}-\frac{1}{4}-2(k+1-\lfloor\frac{j}{2}\rfloor)\\
             &\leq \frac{4k+5}{4}-j+\frac{j^2}{4k+5}-\frac{1}{4}-2(k+1-\frac{j}{2})\\
             &=-k-1+\frac{j^2}{4k+5}\\
             &\leq -k-1+\frac{(2k+1)^2}{4k+5}\\
             &<0.
\end{align*}
Furthermore,  $$d(K_{4k+5},\pm(2k+2))=\frac{1}{4(4k+5)}-\frac{1}{4}-2<0.$$
Thus, all the $d$-invariants are negative for the surgery $K_{4k+5}$ when $K=P(-2,3,4k+3)$.

The case when $q=2k$ is similar.
\end{proof}



According to Theorem \ref{thm:nd} and Lemma \ref{lem:d_jq}, we have the following lemma.

\begin{lemma}
\label{lem:bnd}
Suppose $q\geq4$ and $2q+3$ is square free,  then $M_{q}$ cannot bound any negative-definite 4-manifold.
\end{lemma}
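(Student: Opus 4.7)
The plan is to derive Lemma~\ref{lem:bnd} as an immediate consequence of Theorem~\ref{thm:nd} and Lemma~\ref{lem:d_jq} by contradiction. Assume for contradiction that $M_q$ bounds a negative-definite $4$-manifold $X$. Since $M_q$ is the $(2q+3)$-surgery on $S^3$ along an L-space knot, it is a rational homology sphere with $|H_1(M_q;\mathbb{Z})|=2q+3$, so I set $\delta=2q+3$.

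The hypothesis that $2q+3$ is square-free is exactly the version of Theorem~\ref{thm:nd} that makes no demand on the torsion of $H_1(X;\mathbb{Z})$. Since $2q+3$ is odd, Theorem~\ref{thm:nd} then gives
\[
\max_{\mathfrak{t}\in\spinc(M_q)} 4\,d(M_q,\mathfrak{t}) \;\geq\; 1-\frac{1}{2q+3} \;>\; 0.
\]
In particular, some $\spinc$ structure on $M_q$ must carry a strictly positive $d$-invariant.

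On the other hand, Lemma~\ref{lem:d_jq} asserts that every $d$-invariant of $M_q$ is strictly negative for $q\geq 4$. These two inequalities contradict each other, so no such $X$ exists, and $M_q$ cannot bound a negative-definite $4$-manifold.

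There is no real obstacle at this stage: all the technical work has already been carried out in Lemma~\ref{lem:t_j} (computing the torsion coefficients via Lemma~\ref{lem:AlexPoly}), in Lemma~\ref{lem:d_jq} (plugging those coefficients into the surgery formula of Theorem~\ref{eq:d-invt_L_space_knot} and estimating), and in the quoted Theorem~\ref{thm:nd} of Owens--Strle. The only thing to verify when writing the proof is that the square-free hypothesis on $\delta=2q+3$ is invoked precisely to bypass the torsion-free condition on $H_1(X;\mathbb{Z})$ in Theorem~\ref{thm:nd}, after which the contradiction is immediate.
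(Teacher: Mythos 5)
Your proposal is correct and follows exactly the argument the paper intends: the paper derives Lemma~\ref{lem:bnd} directly from Theorem~\ref{thm:nd} (with $\delta=2q+3$ odd and square-free, so the torsion condition on $H_1(X;\mathbb{Z})$ is not needed) combined with the negativity of all $d$-invariants from Lemma~\ref{lem:d_jq}. Your identification of the role of the square-free hypothesis is precisely right, and nothing further is required.
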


Now we turn back to the proof of the Theorem \ref{thm:main}.
\begin{proof}[Proof of Theorem \ref{thm:main}]
We will prove a slightly stronger result that $M_{q}$ does not admit any weak symplectic semi-fillings.
Suppose on the contrary that $M_{q}$ admits a contact structure which has a weak symplectic semi-filling $X$.  Since $M_{q}$ is an L-space,  by \cite[Theorem 1.4]{OzSz3},   $X$  must have only one boundary component and have $b^{+}_{2}(X)=0$.  Since $M_{q}$ is a rational homology sphere, the intersection form of $X$ is non-degenerate, and thus negative-definite.  This contradicts with Lemma \ref{lem:bnd}.  We finish the proof of Theorem \ref{thm:main}.
\end{proof}

\section{Rational surgeries on $P(-2,3,7)$}
The definition of the invariants $V_s$'s from Equation \eqref{eq:d-invt_rational} comes from the complex $A^+_s$ that appears in the surgery formulas in Heegaard Floer homology. In the case of L-space knots we can compute them using the Alexander polynomials.
\begin{lemma}
\label{lem:computing_V_s}
Suppose $K$ is an L-space knot and its symmetrized Alexander polynomial is $\Delta_{K}(t).$ We rewrite it in the form
\[\frac{t}{t-1}\cdot \Delta_{K}(t)=\sum_{i}a_i \cdot t^i.\]
Then, we have the following formulas for computing $V_s, \forall s\in \mathbb{Z},$
\[V_s=\sum_{i\geq s+1}a_i.\]
\end{lemma}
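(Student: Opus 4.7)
The plan is to treat the equation $\frac{t}{t-1}\cdot\Delta_K(t)=\sum_i a_i\,t^i$ as a formal Laurent series identity, using the expansion $\frac{t}{t-1}=\sum_{k\geq 0}t^{-k}$ (valid for $|t|>1$). Writing $\Delta_K(t)=\sum_j\delta_j\,t^j$ and multiplying, one obtains the closed form $a_i=\sum_{j\geq i}\delta_j$. Since $\Delta_K$ is a Laurent polynomial supported in $|j|\leq g=g(K)$, we have $a_i=0$ for $i>g$ and $a_i=\Delta_K(1)=1$ for $i\leq -g$; in particular $\sum_{i\geq s+1}a_i$ is a finite sum for every $s\in\mathbb{Z}$.

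For $s\geq 0$ the argument is a direct rearrangement. The key input is that for an L-space knot $V_s$ equals the torsion coefficient $t_s(K)=\sum_{j\geq 1}j\,\delta_{s+j}$, which is exactly the quantity appearing in Theorem \ref{eq:d-invt_L_space_knot}. Interchanging summation in $\sum_{i\geq s+1}\sum_{j\geq i}\delta_j$ by counting, for each $j\geq s+1$, the indices $i\in[s+1,j]$, one gets
\[
\sum_{i\geq s+1}a_i=\sum_{j\geq s+1}(j-s)\delta_j=t_s(K)=V_s.
\]

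For $s<0$, I plan to invoke the conjugation symmetry $V_{-s}=V_s+s$ valid for $s\geq 0$ on any L-space knot, which follows from Ni--Wu's identity $H_s=V_{-s}$ combined with the palindromic symmetry $\delta_j=\delta_{-j}$. Setting $s=-r$ with $r\geq 1$ and splitting
\[
\sum_{i\geq -r+1}a_i=\sum_{i\geq r+1}a_i+\sum_{i=-r+1}^{r}a_i=V_r+\sum_{i=-r+1}^{r}a_i,
\]
the claim reduces to the combinatorial identity $\sum_{i=-r+1}^{r}a_i=r$. A final change of summation order, together with $\delta_j=\delta_{-j}$ and the normalization $\Delta_K(1)=\sum_j\delta_j=1$, kills the odd-in-$j$ contributions and forces the remaining terms to add up to exactly $r$.

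The only genuine obstacle is bookkeeping: verifying that the formal-series manipulations are legitimate (which they are, since only finitely many $a_i$'s feed into each relevant partial sum) and that the sign and indexing conventions for $V_s$ align with those for $\delta_j$. Once these conventions are nailed down, the entire proof reduces to elementary double-sum computations.
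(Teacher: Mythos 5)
Your route is genuinely different from the paper's: the paper argues directly at the chain level, using the short exact sequences $0\to A^-_{i-1}\to A^-_i\to A^-_i/A^-_{i-1}\to 0$, the fact that $H_*(A^-_s)\cong\mathbb{F}[U]$ for an L-space knot, and $\chi(HFK^-(K,i))=a_i$, which treats all $s\in\mathbb{Z}$ uniformly. Your plan instead tries to deduce the formula from the two $d$-invariant formulas already quoted. The combinatorics you outline is correct: $a_i=\sum_{j\geq i}\delta_j$, the rearrangement $\sum_{i\geq s+1}a_i=\sum_{j\geq s+1}(j-s)\delta_j=t_s(K)$, and the identity $\sum_{i=-r+1}^{r}a_i=r$ (from $\delta_j=\delta_{-j}$ and $\Delta_K(1)=1$) all check out. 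For $s\geq0$ the input $V_s=t_s(K)$ does follow from comparing Theorem \ref{eq:d-invt_L_space_knot} with the integral case of Equation \eqref{eq:d-invt_rational}, but only after you reconcile the two $\spinc$-indexing conventions and observe that $\max\{V_i,V_{n-i}\}=V_i$ for $0\leq i\leq n/2$ by the monotonicity of $V$; this needs to be written out, and you should be aware that it leans on a theorem whose own proof is essentially the computation the paper performs here.

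The genuine gap is the case $s<0$. The identity $V_{-s}=V_s+s$ for $s\geq0$ is true, but it does not follow from what you cite. Ni--Wu's relation $H_s=V_{-s}$ is a single equation linking the two families of invariants and by itself says nothing about $V_{-s}$ versus $V_s$; and the palindromic symmetry $\delta_j=\delta_{-j}$ is only the Euler-characteristic shadow of the conjugation symmetry of $CFK^\infty$, so it cannot control the $U$-powers $V_s$. What is actually needed is a grading-level argument: the flip symmetry of $CFK^\infty$ identifies $A^-_s=C\{i\leq0,\ j\leq s\}$ with $C\{i\leq s,\ j\leq0\}=U^{-s}A^-_{-s}$, and tracking the degree shift of $U^{-s}$ through the projections to $C\{i\leq0\}$ yields $V_{-s}=V_s+s$ (equivalently $H_s=V_s+s$); alternatively, for L-space knots one can read it off the staircase model. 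Without such an argument your proof establishes the lemma only for $s\geq0$, while the statement claims all $s\in\mathbb{Z}$.
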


\begin{proof}
Let $C$ denote the knot Floer complex $CFK^{\infty}(K)$ induced by a doubly-pointed Heegaard diagram of $K$.
We have the following commutative diagram where the two rows are short exact sequence of complexes,
\[
\xymatrix{0\ar[r] &C\{\max(i,j-s)<0\} \ar[r]\ar[d] & C\ar[r]\ar[d] & C\{\max(i,j-s)\geq 0\}\ar[r] \ar[d]&0\\
          0\ar[r] &C\{i<0\} \ar[r]          & C \ar[r]& C\{i\geq 0\}\ar[r] & 0.
}
\]

Since $A^-_s=C\{\max(i,j-s)\leq0\}\cong C\{\max(i,j-s)<0\}$ and $A^-_{+\infty}$ is isomorphic to $C\{i\leq 0\}\cong C\{i<0\}$, the above diagram translates into
\[
\xymatrix{0\ar[r] &A^-_s \ar[r]\ar[d]_{I_s} & C\ar[r]\ar[d] & A^+_s\ar[r] \ar[d]^{v_s}&0\\
          0\ar[r] & A^-_{+\infty} \ar[r]          & C \ar[r]& B^+_{s}\ar[r] & 0.
}
\]

Since $K$ is an L-space knot, we have that
$A^-_s=\{{\bf{x}}\in CFK^{\infty}(K)|A({\bf{x}})\leq s\}$ has  homology $\mathbb{F}[U]$.  By passing to the long exact sequence of homologies, one can show that the induced map on homology $(I_s)_*:\mathbb{F}[U]\to\mathbb{F}[U]$ is a multiplication of some power $U^{n_s}$ and 
$V_s=n_s.$

From the long exact sequence induced by the short exact sequence  $0\to A^-_{i-1}\xrightarrow{\iota_{i-1}} A^-_{i}\to A^-_i/A^-_{i-1}\to 0,$ we have the following exact sequence $0\to \mathrm{coker}((\iota_{i-1})_*)\to H_*(A^-_i/A^-_{i-1})\to \ker((\iota_{i-1})_*)\to 0.$ From $I_{i-1}=I_i\circ \iota_{i-1}$, it follows that $\iota_{i-1}$ cannot be 0 acting on homology. Thus, $\ker((\iota_{i-1})_*)=0$, and
$H_*(A^-_{i}/A^-_{i-1})=\mathrm{coker}((\iota_{i-1})_*)$.
Hence, we have $n_{i-1}-n_{i}=\mbox{\Large$\chi$}(A^-_{i}/A^-_{i-1}).$

Since $CFK^-(K,i)=A^-_{i}/A^-_{i-1}$, by Proposition 9.2 in \cite{OzSz5}, we have that
\begin{align*} n_s&=\sum_{i\geq s+1}\mbox{\Large$\chi$}(HFK^-(K,i))\\
                  &=\sum_{i\geq s+1}a_i.
\end{align*}
\end{proof}

\begin{lemma}
Let $K$ be the $P(-2,3,7)$ pretzel knot. For all $p\geq 1$, all the d-invariants on $K_{\frac{10p+1}{p}}$ are non-positive.
\end{lemma}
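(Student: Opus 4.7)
The plan is to apply the Ni--Wu formula~\eqref{eq:d-invt_rational} and compute each of the three ingredients explicitly for $K = P(-2,3,7)$, which is the $q = 3$ (so $k = 1$) case of Lemma~\ref{lem:AlexPoly}. Using Lemma~\ref{lem:computing_V_s} one obtains $V_0 = V_1 = 2$, $V_2 = V_3 = V_4 = 1$, and $V_s = 0$ for all $s \geq 5$.

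Next I would parametrize $\spinc$ structures on $K_{(10p+1)/p}$ by writing each $i \in \{0, 1, \ldots, 10p\}$ uniquely as $i = sp + j$ with $s \in \{0, 1, \ldots, 10\}$ and $0 \leq j \leq p-1$. Then $\lfloor i/p \rfloor = s$, while $-\lfloor (i - 10p - 1)/p \rfloor$ equals $11 - s$ if $j = 0$ and $10 - s$ if $j \geq 1$. Combining this with Step~1 yields a short table for $V_{\max}(s, j) := \max\{V_{\lfloor i/p \rfloor}, V_{-\lfloor (i - 10p - 1)/p \rfloor}\}$: it equals $0$ only when $s = 5$ (any $j$) or $(s, j) = (6, 0)$, equals $2$ in the extreme cases $s \in \{0, 1, 10\}$ and $(s, j) = (9, j \geq 1)$, and equals $1$ in all remaining cases.

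The lens space d-invariants $d(L(10p+1, p), i)$ are then computed from the Ozsv\'ath--Szab\'o recursion applied to the two-step continued fraction $(10p+1)/p = 10 + 1/p$, which reduces after one step to $L(p,1) = S^3_p(U)$, whose d-invariants are given by the unknot surgery formula. Assembling the two levels and substituting $i = sp + j$ yields
\[
d(L(10p+1, p), sp + j) = \frac{2p(s-5)(s-6) + 4j(s - 5 + 5p) - 20 j^2 - 5 p^2}{2(10p + 1)}.
\]

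The final step is to verify the inequality $d(L(10p+1, p), sp + j) \leq 2 V_{\max}(s, j)$ for every $(s, j)$. The crucial cases are the two with $V_{\max} = 0$: for $s = 5$ the numerator above collapses to $-5(2j - p)^2 \leq 0$, and for $(s, j) = (6, 0)$ it is $-5p^2 \leq 0$. In every other case the target bound on the numerator is $4(10p+1) V_{\max} \in \{40p + 4,\, 80p + 8\}$; since the numerator is a concave quadratic in $j$ whose unconstrained maximum on the real line is linear in $p$ with a small coefficient, the inequality then holds with ample slack. The main obstacle is purely bookkeeping: the distinction between $j = 0$ and $j \geq 1$ in $V_{\max}$ forces several sub-cases, and one must confirm that none of the boundary cases $(s, j) \in \{(0, 0), (5, 0), (6, 0), (9, 0), (10, 0)\}$ is misclassified.
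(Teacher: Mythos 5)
Your proposal is correct and follows essentially the same route as the paper: the Ni--Wu formula, the computation $V_0=V_1=2$, $V_2=V_3=V_4=1$, $V_s=0$ for $s\geq 5$, and the Ozsv\'ath--Szab\'o recursion for $d(L(10p+1,p),i)$ via $L(p,1)$, followed by a case check over the eleven blocks $i=sp+j$ (I verified that your closed-form numerator and the identity $-5(2j-p)^2$ at $s=5$ agree with the paper's expression $\frac{(11p-2i)^2}{4p(10p+1)}-\frac{(p-2j)^2}{4p}$). The only difference is organizational: you package the range analysis into a single quadratic in $j$ and invoke concavity, whereas the paper bounds six explicit ranges of $i$ separately; your bookkeeping of the $j=0$ versus $j\geq 1$ distinction is actually slightly more careful than the paper's.
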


\begin{proof}
By Equation \eqref{eq:d-invt_rational}, we have $\forall 0\leq i\leq 10p,$
\begin{align*}
d(K_{\frac{10p+1}{p}},i)&=d(L(10p+1,p),i)-2\max\{V_{\lfloor\frac{i}{p}\rfloor}, V_{-\lfloor\frac{i-10p-1}{p}\rfloor}\}.
\end{align*}

By Lemma \ref{lem:computing_V_s} and Lemma \ref{lem:AlexPoly}, we know $$V_0=V_1=2,\quad V_2=V_3=V_4=1,\quad V_p=0,\  \forall p\geq 5.$$

By Proposition 4.8 in \cite{OzSz1}, we have
\begin{align*}
d(L(10p+1,p),i)&=-\frac{1}{4}+\frac{(11p-2i)^2}{4p(10p+1)}-d(L(p,1),j)\\
&=\frac{(11p-2i)^2}{4p(10p+1)}-\frac{(p-2j)^2}{4p},
\end{align*}
with $0\leq j\leq p-1$ being the residue of $i$ modulo $p$.

Now we will discuss the correction terms case by case.
\begin{enumerate}
\item If $0 \leq i\leq 2p-1$, then $\lfloor\frac{i}{p}\rfloor=1$. Then,  $V_{\lfloor\frac{i}{p}\rfloor}=2.$ Thus, $$\max\{V_{\lfloor\frac{i}{p}\rfloor}, V_{-\lfloor\frac{i-10p-1}{p}\rfloor}\}=2.$$

    Therefore,
    \[d(K_{\frac{10p+1}{p}},i)\leq \frac{(11p)^2}{4p \cdot10p}-2\cdot2<0.\]
\item If $2p \leq i\leq 5p-1$, then  $\lfloor\frac{i}{p}\rfloor=2,3,4$. Then, $V_{\lfloor\frac{i}{p}\rfloor}=1$. So $$\max\{V_{\lfloor\frac{i}{p}\rfloor}, V_{-\lfloor\frac{i-10p-1}{p}\rfloor}\}=1.$$

   In addition, since $2p \leq i\leq 5p-1$, we have $(11p-2i)^2\leq (7p)^2.$
    Thus,
    \[d(K_{\frac{10p+1}{p}},i)\leq \frac{(7p)^2}{4p\cdot 10p}-2\cdot1<0.\]
\item If $5p \leq i\leq 6p-1$, then  let
\[i=5p+j,\]
where $0\leq j\leq p-1.$

    Thus,
    \begin{align*}
    d(K_{\frac{10p+1}{p}},i)&\leq \frac{(11p-2i)^2}{4p\cdot (10p+1)}-\frac{(p-2j)^2}{4p}\\
    &=\frac{(p-2j)^2}{4p\cdot (10p+1)}-\frac{(p-2j)^2}{4p}\\
    &\leq 0.
    \end{align*}

\item If $i=6p,$ then
\[d(K_{\frac{10p+1}{p}},i)\leq \frac{p^2}{4p\cdot (10p+1)}-\frac{p^2}{4p}<0.\]
\item If $6p+1\leq i\leq 8p+1$, then  $-\lfloor\frac{i-10p-1}{p}\rfloor=2,3,4$. Then, $V_{-\lfloor\frac{i-10p-1}{p}\rfloor}=1.$ So $$\max\{V_{\lfloor\frac{i}{p}\rfloor}, V_{-\lfloor\frac{i-10p-1}{p}\rfloor}\}=1.$$

   In addition, since $6p+1 \leq i\leq 8p+1$, we have $(11p-2i)^2\leq (7p)^2.$
    Thus,
    \[d(K_{\frac{10p+1}{p}},i)\leq \frac{(7p)^2}{4p\cdot 10p}-2\cdot1<0.\]
\item If $8p+2\leq i\leq 10p$, then either $-\lfloor\frac{i-10p-1}{p}\rfloor=1$. Then,  $V_{-\lfloor\frac{i-10p-1}{p}\rfloor}=2.$ Thus, $$\max\{V_{\lfloor\frac{i}{p}\rfloor}, V_{-\lfloor\frac{i-10p-1}{p}\rfloor}\}=2.$$

    Therefore,
    \[d(K_{\frac{10p+1}{p}},i)\leq \frac{(11p)^2}{4p \cdot10p}-2\cdot2<0.\]
\end{enumerate}
\end{proof}

Now we are ready to prove Theorem \ref{thm:P(-2,3,7)}.
\begin{proof}[Proof of Theorem \ref{thm:P(-2,3,7)}]
We follow the same line as in Theorem \ref{thm:main}. By Lemma \ref{lem:L_spaces}, we have $M'_p$ are all L-spaces.  As long as $10p+1$ is square-free,  by Theorem \ref{thm:nd}, $M'_p$ does not bound any negative definite 4-manifolds and thus does not admit any weakly symplectically fillable contact structures.  We finish the proof.
\end{proof}

\end{document}